\DeclareMathOperator \re {Re}
\DeclareMathOperator \im {Im}
\newtheorem*{lem}{Lemma}
\newtheorem{thm}{Theorem}
\theoremstyle{definition}
\numberwithin{prop}{section}
\begin{document}

\title{Semiclassical estimates for scattering on the real  line}

\author{Kiril Datchev}
\address{Department of Mathematics, Purdue University,  West Lafayette, IN, USA}
\email{kdatchev@purdue.edu}
\author{Jacob Shapiro}
\address{Mathematical Sciences Institute, Australian National University, Acton, ACT, Australia}
\email{jacob.shapiro@anu.edu.au}

\thanks{The authors are grateful to Maciej Zworski for his suggestions, comments, and encouragement. KD was partially supported by NSF Grant DMS-1708511, and JS was partially supported by an AMS-Simons Travel Grant and by ARC grant DP180100589.}

\begin{abstract}
 We prove explicit semiclassical resolvent estimates for an integrable potential on the real line. The proof is a comparatively easy case of the spherical energies method, which has been used to prove similar theorems in higher dimensions and in more complicated geometric situations. The novelty in our results lies in the weakness of the assumptions on the potential.
\end{abstract}

\maketitle

\section{Introduction}

In this paper we prove  explicit resolvent estimates for the Schr\"odinger operator
\[
 P = -h^2 \partial_x^2 + V,
\]
where $h>0$ is a semiclassical parameter, and $V\colon \mathbb R \to \mathbb R$ is integrable.

\begin{thm}\label{t:adelta}
Let $V \in L^1(\mathbb R)$. Then 
\begin{equation}\label{e:expm}
 \|m^{1/2} (P  - E-i\varepsilon)^{-1} m^{1/2}\|_{L^2(\mathbb R) \to L^2(\mathbb R)} \le \exp\left(\frac{2 E^{-1/2} \int m }{ h }\right),
\end{equation}
for any $m \in L^1(\mathbb R)$ such that
$|V|\le m$, and for any $E \ge 2 \varepsilon>0$ and $h>0$.
\end{thm}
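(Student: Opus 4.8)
The plan is to pass to the ODE $(P-E-i\varepsilon)u=f$ and run the one–dimensional case of the spherical–energies method. First I would reduce to the following: \emph{given $u$ with $(P-E-i\varepsilon)u=f$, show $\|m^{1/2}u\|_{L^2}\le \exp\!\big(2E^{-1/2}\!\int m/h\big)\,\|m^{-1/2}f\|_{L^2}$.} By density (taking $f=m^{1/2}g$ with $g\in C_c^\infty$, so that $f\in L^1\cap L^2$ and $\|g\|_{L^2}=\|m^{-1/2}f\|_{L^2}$) and, if desired, after replacing $m$ by $m+\delta e^{-x^2}$ and letting $\delta\to0$ at the end, one may assume $m>0$ and $f\in L^1\cap L^2$. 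Since $\varepsilon>0$ lies in the resolvent set of the self–adjoint operator $P$, we have $u=(P-E-i\varepsilon)^{-1}f$, with $u$ and $u'$ continuous and tending to $0$ at $\pm\infty$.

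Next, set $\kappa=(E+i\varepsilon)^{1/2}$ with $\re\kappa>0$, let $\beta:=\im\kappa>0$, and introduce $g_\pm:=hu'\pm i\kappa u$. Then $u=(g_+-g_-)/(2i\kappa)$, so $\sqrt E\,|u|\le\tfrac12(|g_+|+|g_-|)$ because $|\kappa|\ge\sqrt E$ (this is where the hypothesis $E\ge2\varepsilon$ is used, to keep $\kappa$ comparable to $\sqrt E$), and $g_\pm\to0$ at $\pm\infty$. The equation becomes $hg_\pm'=\pm i\kappa g_\pm+(Vu-f)$, whence
\[
 \frac{d}{dx}|g_-|^2=\frac2h\re\!\big((Vu-f)\overline{g_-}\big)+\frac{2\beta}{h}|g_-|^2,\qquad
 \frac{d}{dx}|g_+|^2=\frac2h\re\!\big((Vu-f)\overline{g_+}\big)-\frac{2\beta}{h}|g_+|^2 .
\]
The absorption thus enters only through the self–terms $\pm\tfrac{2\beta}{h}|g_\pm|^2$, and with the sign that helps: integrating the first identity from $x$ to $+\infty$ and the second from $-\infty$ to $x$, one may simply discard these terms and use $|V|\le m$ to obtain
\[
 |g_-(x)|^2\le\frac2h\int_x^{\infty}\big(|f|+m|u|\big)\,|g_-|\,dt,\qquad
 |g_+(x)|^2\le\frac2h\int_{-\infty}^{x}\big(|f|+m|u|\big)\,|g_+|\,dt .
\]

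The real work is to extract the exponential from these two inequalities together with $\sqrt E\,|u|\le\tfrac12(|g_+|+|g_-|)$. Writing $\Psi(x):=\int_{-\infty}^x m$, I would multiply the first inequality by $m(x)\Psi(x)^n$, integrate in $x$, and exchange the order of integration; the elementary identity $\int_{-\infty}^t m\,\Psi^n\,dx=\Psi(t)^{n+1}/(n+1)$ gains a factor $1/(n+1)$ at the $n$–th iteration, so that the geometric series governing the naive Born/Neumann estimate (which only closes when $\int m$ is small) is promoted to the factorial series $\sum_n\big(2E^{-1/2}\!\int m/h\big)^n/n!$, i.e.\ to $\exp\!\big(2E^{-1/2}\!\int m/h\big)$. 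Carrying this out for $|g_-|^2$, symmetrically for $|g_+|^2$ with the weight $\big(\int_x^{\infty}m\big)^n$, and recombining through $\int m|u|^2\le\tfrac1{2E}\int m(|g_+|^2+|g_-|^2)$ would yield the asserted bound. I expect the delicate point to be precisely this recombination: the source term $m|u|$ in the inequality for $g_-$ couples to $g_+$ and conversely, so the two iterations must be run simultaneously and the resulting cross terms $\int m\,|g_+|\,|g_-|\,\Psi^{\,n}\,dx$ controlled sharply enough that the feedback of $u$ into its own equation does not reinstate a smallness restriction on $\int m$; handling this coupling — rather than the absorption or the reduction — is the heart of the matter.
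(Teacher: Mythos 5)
Your reduction, the variables $g_\pm=hu'\pm i\kappa u$, the identities $hg_\pm'=\pm i\kappa g_\pm+(Vu-f)$, the favorable sign of the absorption terms, and the resulting integral inequalities are all correct (the decay of $u,u'$ at $\pm\infty$ can be justified as in the paper's Lemma; incidentally, $|\kappa|\ge E^{1/2}$ holds for every $\varepsilon>0$, so $E\ge2\varepsilon$ is not what that step needs). The genuine gap is exactly the step you defer to the end, and it cannot be closed by the iteration you describe. After discarding phases, the only facts you retain are: $|g_-(x)|^2\le\frac2h\int_x^\infty(|f|+m|u|)|g_-|$, $|g_+(x)|^2\le\frac2h\int_{-\infty}^x(|f|+m|u|)|g_+|$, and $2E^{1/2}|u|\le|g_+|+|g_-|$, among nonnegative functions. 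These carry no information in the semiclassical regime: with $f\equiv0$, $M=\int m$, $\Psi(x)=\int_{-\infty}^x m$, take $|g_+|=\Psi$, $|g_-|=M-\Psi$, $|u|=M/(2E^{1/2})$. Then $\frac2h\int_x^\infty m|u|\,|g_-|=\frac{M}{2E^{1/2}h}\,(M-\Psi(x))^2$ and $\frac2h\int_{-\infty}^x m|u|\,|g_+|=\frac{M}{2E^{1/2}h}\,\Psi(x)^2$, so every retained inequality (including the one-sided vanishing $g_-(+\infty)=g_+(-\infty)=0$) is satisfied by a nonzero triple as soon as $\int m\ge 2E^{1/2}h$. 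Hence no bound at all, let alone \eqref{e:expm}, follows from these inequalities when $\int m\ge 2E^{1/2}h$, which is precisely the regime of interest as $h\to0$. Equivalently, the iteration operator $(a,b)\mapsto\frac{1}{2E^{1/2}h}\bigl(\int_{-\infty}^x m(a+b),\ \int_x^\infty m(a+b)\bigr)$ is not quasi-nilpotent: $(\Psi,\,M-\Psi)$ is a positive eigenfunction with eigenvalue $\frac{M}{2E^{1/2}h}$. The factorial gain from $\int_{-\infty}^t m\,\Psi^n=\Psi^{n+1}/(n+1)$ is real for the self-coupling (Volterra) terms, but the forward--backward cross coupling ($g_-$ at $x$ fed by $g_+$ on $(x,\infty)$, which is fed by $g_-$ on half-lines going the other way) destroys the Volterra structure, and the Neumann series converges only when $\int m<2E^{1/2}h$ --- exactly the smallness restriction you set out to remove.

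What is lost is the phase cancellation between $g_+$ and $g_-$: you take absolute values before any weight is introduced, and the remaining endpoint conditions $g_+(+\infty)=g_-(-\infty)=0$ cannot rescue this uniformly in $\varepsilon$, since the discarded absorption has coefficient $\beta\approx\varepsilon/(2E^{1/2})$ and its quantitative content degenerates as $\varepsilon\to0^+$. The paper proceeds in the opposite order: it first proves the weighted estimate \eqref{e:resestw} of Theorem \ref{t:m} by differentiating $wF$ with $F=|hu'|^2+E|u|^2$ and a monotone weight $w$ satisfying \eqref{e:w2}, so the potential term is absorbed by $w'$ while the identity is still signed; Theorem \ref{t:adelta} then follows by the single choice $w(x)=2e^{-\frac k{2h}\int m}\sinh\bigl(\frac kh\int_a^x m\bigr)$, whose derivative dominates $\frac{2k}h e^{-\frac k{2h}\int m}\,m$ --- that normalization is where the exponential constant in \eqref{e:expm} comes from. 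If you wish to salvage your framework, the moral is to insert an exponential (sinh-type) weight built from $\int m$ into the energy identity before taking moduli, rather than trying to reconstruct the exponential afterwards by iterating modulus inequalities; as written, your proposal proves the theorem only for $\int m<2E^{1/2}h$.
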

For example, if $|V| \le A (1+|x|)^{-1-\delta}$, then we get a bound between the usual weighted spaces of the limiting absorption principle $(1 + |x|)^{- \frac {1 + \delta}2}L^2(\mathbb R) \to (1 + |x|)^{\frac {1 + \delta}2} L^2(\mathbb R)$; see \cite[\S XIII.8]{rs}. We have not attempted to optimize the numerical constants, and the same proof gives   similar bounds in other sectors $E \ge c \varepsilon >0$. The estimate is invariant under two rescalings of the operator, one is $P = h^2(-\partial_x^2 + h^{-2}V)$ and the other is $x = hy$.

Estimates like \eqref{e:expm} are important for their applications to smoothing, wave decay, and resonance free regions; see \cite[Chapter 6]{dz} for an introduction and \cite[\S3.2]{zs} for a survey of recent results. When $V$ is compactly supported, we also have an improvement away from the support of $V$ in Theorem \ref{one d resolv est} below.

The main novelty lies in the sharp dependence on $\varepsilon$ and $h$ (see \cite{ddz, dj} for corresponding lower bounds), under   weak regularity and decay assumptions on $V$. The decay assumption is essentially optimal; examples due to Wigner and Von Neumann show that $P$ may have a positive eigenvalue if $V$ decays like $|x|^{-1}$ \cite[\S XIII.13]{rs}, and in such a situation $m^{1/2} (P  - E-i\varepsilon)^{-1} m^{1/2}$ is unbounded as $\varepsilon \to 0$. Bounds for more slowly decaying $V$ hold under assumptions on $V'$ \cite{da}.

Our proof is a comparatively easy case of the method of estimating spherical energies, which has been used to prove versions of \eqref{e:expm} in more complicated geometric situations, but with stronger regularity and decay assumptions. For semiclassical resolvent estimates, this method goes back to the work of Cardoso and Vodev \cite{cv}, and more generally in scattering theory it goes back  to the work of Kato \cite{k1}.
In our setting, among other simplifications, in place of a spherical energy we have the pointwise energy
 \begin{equation}\label{e:fdef}
  F(x) = |hu'(x)|^2 + E |u(x)|^2.
 \end{equation}
A similar pointwise energy is used implicitly in \cite[\S2]{cd} to prove uniform resolvent estimates for repulsive potentials on the half-line.

When $V$ is smooth, the exponential bound \eqref{e:expm} (with different constants) was first proved by Burq \cite{burq0, burq}, who also considered higher dimensional problems and other generalizations. Further proofs and generalizations can be found in \cite{cv,rt,v,da,ddh,sh, gannot}. If $V \in L^\infty(\mathbb R^n)$ with $n>1$, then only weaker versions of \eqref{e:expm} are known \cite{sh2, kv, v2, v3, v4}, with $e^{M/h}$ replaced by $e^{M/h^\rho}$ with $\rho >1$, even if $V$ has compact support.

If $V$ is compactly supported (or, more generally, holomorphic near infinity), then the bound \eqref{e:expm} implies the existence of an exponentially small resonance free region near the real axis, thanks to an identity of Vodev \cite[(5.4)]{v}. For $V \in L^\infty(\mathbb R)$ compactly supported, such regions have been known to exist for some time: see \cite{h}, and also \cite[\S 2.8]{dz} for another proof as well as detailed examples and more references. But the reverse problem of deducing a resolvent estimate like \eqref{e:expm} from the existence of a resonance free region seems to be more difficult.

To the authors' knowledge, \eqref{e:expm} is the first semiclassical resolvent estimate for $V \not\in L^\infty(\mathbb R)$, but there has been much work on related problems. Zworski \cite{z} and Hitrik \cite{hit} analyzed the distribution of resonances for $V \in L^1(\mathbb R)$ either compactly supported or exponentially decaying, using  Melin's representation of the scattering matrix \cite{m}. More recently Korotyaev (see \cite{kor} and references therein) has proved many further results in this topic and other related ones, including trace formulas and inverse results. Note however that the methods in those papers require at least $(1+|x|)V \in L^1(\mathbb R)$, due to the finer aspects of scattering theory being analyzed, whereas in the present paper we require only $V \in L^1(\mathbb R)$. The condition that $V \in L^1(\mathbb R)$ is called the short range condition, and it allows one to extend the integral kernel of the resolvent up to the continuous spectrum, while as mentioned above if $V \not\in L^1(\mathbb R)$ then the continuous spectrum may contain embedded eigenvalues: see \cite[\S 5.1]{y} and \cite{y18} and references therein for more on this and for other results concerning short and long range potential scattering in one dimension.

When $V$ is compactly supported, we have an improvement away from the support of $V$.

\begin{thm} \label{one d resolv est}
Let $V \in L^1(\mathbb R)$ be  supported in $[-R,R]$.
Then 
\begin{equation}\label{e:ext}
 \|{\bf 1}_{|x|>R}(1 + |x|)^{- \frac {1 + \delta}2}(P - E-i\varepsilon)^{-1}(1+|x|)^{- \frac {1 + \delta}2}{\bf 1}_{|x|>R}\|_{L^2(\mathbb R) \to L^2(\mathbb R)} \le \frac{8(1+R)^{-\delta}\delta^{-1} E^{- 1/2}}h,
\end{equation}
for any $E \ge 2 \varepsilon>0$ and $\delta, \ h >0$, where ${\bf 1}_{|x|>R}$ is the characteristic function of $\{x \colon |x|>R\}$.
\end{thm}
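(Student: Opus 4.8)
The plan is to exploit that $V$ vanishes on $\{|x|>R\}$: there the equation is explicitly solvable, so everything reduces to controlling the two boundary values $u(\pm R)$, which I will in turn extract from an energy identity on $[-R,R]$. Fix $f\in L^2(\R)$ supported in $\{|x|>R\}$, set $g=(1+|x|)^{-(1+\delta)/2}f$ and $u=(P-E-i\varepsilon)^{-1}g$; then $g$ is supported in $\{|x|>R\}$, $g\in L^1\cap L^2$, $u\in L^2(\R)$, and (since $V\in L^1$ and in one dimension $u\in H^1\hookrightarrow L^\infty$, so $Vu\in L^1$) one has $u\in C^1(\R)$. Write $k=\sqrt{E+i\varepsilon}$ with $\re k>0$; then $\im k\ge0$, $|k|\ge\sqrt E$, and $(\re k)^2=\tfrac12(|k|^2+E)\ge E$. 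On $(R,\infty)$ the function $u$ solves $-h^2u''-(E+i\varepsilon)u=g_+:=g|_{(R,\infty)}$ and lies in $L^2$ near $+\infty$, hence equals $c_+e^{ikx/h}+(R_0g_+)(x)$, where $R_0$ is the free resolvent, with kernel $\tfrac{i}{2kh}e^{ik|x-y|/h}$; similarly on $(-\infty,-R)$. Since $\supp g_+$ lies to the right of $R$, the function $R_0g_+$ equals a constant multiple of $e^{-ikx/h}$ on $(-\infty,R]$, and likewise $R_0g_-$ is a multiple of $e^{ikx/h}$ on $[-R,\infty)$; differentiating the two solution formulas at $\pm R$ and using this gives the outgoing boundary conditions $hu'(R)=ik\,u(R)-2ik\,(R_0g_+)(R)$ and $hu'(-R)=-ik\,u(-R)+2ik\,(R_0g_-)(-R)$. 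Rewriting $u(x)=(u(R)-(R_0g_+)(R))e^{ik(x-R)/h}+(R_0g_+)(x)$ for $x>R$ and using $|e^{ik(x-R)/h}|\le1$, I get $|u(x)|\le|u(R)|+|(R_0g_+)(R)|+|(R_0g_+)(x)|$ there; and by Cauchy--Schwarz $|(R_0g_\pm)(x)|\le\tfrac1{2|k|h}\|g_\pm\|_{L^1}\le\tfrac1{2\sqrt E\,h}(\delta^{-1}(1+R)^{-\delta})^{1/2}\|f\|_{L^2}$ for every $x$.

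It remains to estimate $u(\pm R)$. I pair the interior equation $-h^2u''+Vu-(E+i\varepsilon)u=0$ (valid on $(-R,R)$, where $g=0$) with $\overline u$, integrate by parts over $(-R,R)$, substitute the outgoing boundary conditions, and take the imaginary part. Since $V$ is real, the terms $h^2\int_{-R}^R|u'|^2$ and $\int_{-R}^R V|u|^2$ are real and drop out, leaving
\[
 (\re k)\big(|u(R)|^2+|u(-R)|^2\big)+\frac\varepsilon h\int_{-R}^R|u|^2=2\,\re\!\Big(k\big[\overline{u(R)}(R_0g_+)(R)+\overline{u(-R)}(R_0g_-)(-R)\big]\Big).
\]
The key point is the sign: $|u(\pm R)|$ appear on the left with the positive coefficient $\re k$ (geometrically, energy leaks outward), so, discarding the nonnegative $\varepsilon$-term and applying Cauchy--Schwarz to the right-hand side, $(|u(R)|^2+|u(-R)|^2)^{1/2}\le\tfrac{2|k|}{\re k}(|(R_0g_+)(R)|^2+|(R_0g_-)(-R)|^2)^{1/2}$. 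The hypothesis $E\ge2\varepsilon$ makes $|k|/\re k$ bounded by an absolute constant close to $1$, and the right-hand side is $\le\tfrac{2}{\sqrt E\,h}(\delta^{-1}(1+R)^{-\delta})^{1/2}\|f\|_{L^2}$ by the bound from the first paragraph (now combining the two half-line contributions into $\|f\|_{L^2}^2$).

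Combining the two estimates, $|u(x)|\le C\,\tfrac1{\sqrt E\,h}(\delta^{-1}(1+R)^{-\delta})^{1/2}\|f\|_{L^2}$ for all $|x|>R$ with an explicit absolute $C$ of size at most $3$; squaring, multiplying by $(1+|x|)^{-1-\delta}$, and integrating over $\{|x|>R\}$ (which contributes a further factor $2\delta^{-1}(1+R)^{-\delta}$) yields $\|{\bf 1}_{|x|>R}(1+|x|)^{-(1+\delta)/2}u\|_{L^2}\le C'\,\tfrac{(1+R)^{-\delta}\delta^{-1}E^{-1/2}}{h}\|f\|_{L^2}$ with $C'\le8$. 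Since the operator in \eqref{e:ext} first multiplies by ${\bf 1}_{|x|>R}$, applying it to an arbitrary $L^2$ function reduces to exactly this computation, which proves \eqref{e:ext}.

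I do not expect any single hard step; the work is in getting the setup right. The one place where something must go right rather than wrong is the sign of $\re k$ in the energy identity, which hinges on taking the correct branch of $\sqrt{E+i\varepsilon}$ and on the precise form of the outgoing boundary conditions (these in turn use the $C^1$-matching of $u$ across $\pm R$ and the explicit solution in $\{|x|>R\}$); after that it is routine to track the numerical constants and check that they stay below $8$.
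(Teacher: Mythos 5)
Your argument is correct, and it is genuinely different from the paper's. The paper obtains \eqref{e:ext} in one line as a corollary of the weighted estimate \eqref{e:resestw} of Theorem \ref{t:m}, by choosing an odd weight $w$ that vanishes on $[-R,R]$ and equals $1-(1+R)^\delta(1+x)^{-\delta}$ for $x>R$, so that the hypothesis $\frac kh|Vw|\le w'$ holds trivially on the support of $V$; a second, kernel-based argument via Jost solutions and Wronskian identities appears in \S\ref{s:wronskian}, but only for $\varepsilon=0$. Your proof is instead self-contained: you solve explicitly in $\{|x|>R\}$ with the free resolvent $R_0$, derive the outgoing conditions $hu'(\pm R)=\pm ik\,u(\pm R)\mp 2ik\,(R_0g_\pm)(\pm R)$ with $k=\sqrt{E+i\varepsilon}$, and control $u(\pm R)$ through the flux identity obtained by pairing the interior equation with $\bar u$ on $(-R,R)$ and taking imaginary parts; the sign of $\re k\ge E^{1/2}$ is indeed the crucial point, and your branch choice and $C^1$ matching (valid since $Vu\in L^1$, so $u'$ is absolutely continuous) make it come out right. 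I checked the identity and the constants: with $E\ge 2\varepsilon$ one has $|k|/\re k\le 1.03$, the boundary values are bounded by about $1.03\,E^{-1/2}h^{-1}(\delta^{-1}(1+R)^{-\delta})^{1/2}\|f\|_{L^2}$, the pointwise bound on $\{|x|>R\}$ has constant about $2.03$, and the final constant is about $2.9<8$ (in fact closer to the constant $2$ obtained by the paper's $\varepsilon=0$ kernel argument than to the $8$ coming from Theorem \ref{t:m}). One small point you should state explicitly: to discard the growing exponential you need $R_0g_\pm\in L^2$ near $\pm\infty$, which holds because $\im k>0$ and $g_\pm\in L^1$ (Young's inequality), so the argument as written genuinely uses $\varepsilon>0$ — which is all the theorem requires. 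The trade-off between the two routes: your argument exploits compact support directly, is elementary, handles $\varepsilon>0$ without Theorem \ref{t:m}, and gives a better constant, but it does not extend to Theorem \ref{t:adelta}, whereas the paper's weight-function method yields both theorems from the single estimate \eqref{e:resestw}.
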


When $V$ is smooth, the improvement \eqref{e:ext}  away from the support of $V$ was first proved by Cardoso and Vodev \cite{cv}, refining earlier work of Burq \cite{burq}, and again analogous results  hold for much more general operators \cite{cv,rt,v,da,ddh,sh}. But if $n>1$, then the cutoff ${\bf 1}_{|x|>R}$ may need to be replaced by ${\bf 1}_{|x|>R'}$ with $R' \gg R$, even when $V \in C_c^\infty(\mathbb R^n)$; see \cite{dj} for corresponding lower bounds, and also for an application of an exterior estimate like \eqref{e:ext} to integrated wave decay.

The rest of the paper is organized as follows. In \S\ref{s:proofs}, we prove a stronger weighted resolvent estimate which implies both \eqref{e:expm} and \eqref{e:ext}. In \S\ref{s:wronskian}, we  prove \eqref{e:ext} in the case $\varepsilon = 0$ by estimating  the integral kernel of the resolvent using Wronskian identities; it would be interesting to know if such an approach could be applied to \eqref{e:expm}. Throughout, $L^p$ means $L^p(\mathbb R)$.

\section{Weighted resolvent estimates}\label{s:proofs}

We will deduce Theorems \ref{t:adelta} and 2 from the following stronger result.

\begin{thm}\label{t:m}
 Let $V \in L^1$, let $h>0$, and  $E\ge 2 \varepsilon > 0$. Fix $w\colon \mathbb R \to \mathbb [-1,1]$ such that $w' \in L^1$ and
\begin{equation}\label{e:w2}
\frac k h |Vw|\le w',
\end{equation}
where $k = 4/E^{1/2}$. Then 
\begin{equation}\label{e:resestw}
 \|(w')^{1/2} (P - E-i\varepsilon)^{-1}(w')^{1/2}\|_{L^2 \to L^2} \le \frac {8E^{-1/2}}h.
\end{equation} 
\end{thm}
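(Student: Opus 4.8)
The plan is to use the pointwise energy $F(x) = |hu'(x)|^2 + E|u(x)|^2$ from \eqref{e:fdef}, where $u = (P - E - i\varepsilon)^{-1}f$ for a test function $f$, and to show that $w'|u|^2$ and $w'|hu'|^2$ are controlled by $\|(w')^{1/2}f\|_{L^2}\|(w')^{1/2}u\|_{L^2}$, which closes the estimate. The starting point is the equation $-h^2u'' + Vu = (E + i\varepsilon)u + f$, i.e. $h^2 u'' = (V - E - i\varepsilon)u - f$. First I would compute $F'(x)$, which by direct differentiation equals $2\re(h^2 u'' \cdot \overline{hu'}/h) + 2E\re(u'\bar u)$; substituting the equation, the $E|u'|^2$-type terms cancel and one is left with $F'(x) = \tfrac 2h\re\bigl((V - i\varepsilon)u\overline{hu'}\bigr) - \tfrac 2h\re\bigl(f\overline{hu'}\bigr)$. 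The $\varepsilon$ term has a sign: $-\tfrac{2\varepsilon}{h}\im(u\overline{hu'})$, and one should track it carefully since the factor $E \ge 2\varepsilon$ is what makes it harmless after integration.

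Next I would multiply the identity for $F'$ by a suitable monotone weight built from $w$ — roughly, test against $\sgn(x)$ or against $w$ itself — and integrate. The key algebraic point is that $2|V||u||hu'| \le E^{-1/2}|V|\,F$ pointwise (AM–GM: $2|u||hu'| \le E^{1/2}|u|^2 + E^{-1/2}|hu'|^2 \le E^{-1/2}F$ using $E \cdot E^{-1/2} = E^{1/2}$), so the potential term is bounded by $\tfrac{2}{hE^{1/2}}|V|F = \tfrac{k}{2h}|V|F$ with $k = 4E^{-1/2}$. The hypothesis \eqref{e:w2}, $\tfrac kh|Vw| \le w'$, is exactly what is needed so that, after pairing with the weight, the potential contribution is absorbed into a term involving $\int w' F$, which in turn is comparable to $E\int w'|u|^2 + \int w'|hu'|^2$. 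The source term $\tfrac 2h f\overline{hu'}$ integrates, after Cauchy–Schwarz in the weighted norm, to something like $\tfrac 2h \|(w')^{1/2}f\|\,\|(w')^{1/2}u'\|$ or is handled by splitting $F$; combining everything yields $\int w' F \le C h^{-1}\|(w')^{1/2}f\|_{L^2}(\int w' F)^{1/2}$ up to constants, whence $(\int w'F)^{1/2} \le Ch^{-1}\|(w')^{1/2}f\|_{L^2}$ and in particular $\|(w')^{1/2}u\|_{L^2} \le 8E^{-1/2}h^{-1}\|(w')^{1/2}f\|_{L^2}$, which is \eqref{e:resestw}.

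The main obstacle, I expect, is the bookkeeping of boundary terms and the precise choice of weight against which $F'$ is paired. Since $w$ need not be monotone, one cannot simply integrate $F'$ against $w$ and hope $w''$ behaves; instead one wants to exploit that $\int_{-\infty}^\infty F'(x)\,\phi(x)\,dx$ with $\phi$ bounded and of bounded variation gives $-\int F\,d\phi$, and one needs $F \to 0$ at $\pm\infty$ (which follows because $u \in H^2$ decays, $f$ being compactly supported or Schwartz by density). Choosing $\phi$ related to $\pm\int_x^{\pm\infty}w'$ or to $w$ itself so that $d\phi$ has a favorable sign where $V$ is large, while keeping $|\phi| \le 1$, is the delicate step; the constant $k = 4E^{-1/2}$ and the final constant $8E^{-1/2}$ should emerge from tracking the factors of $2$ in AM–GM, the factor from $E \ge 2\varepsilon$, and one application of Cauchy–Schwarz. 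A density argument reduces to $f \in C_c^\infty$ so that all integrations by parts and limits at infinity are justified, and then Theorems \ref{t:adelta} and \ref{one d resolv est} follow by choosing $w$ appropriately — for \eqref{e:expm} an exponential-type weight with $w' \sim (2E^{-1/2}\int m/h)\cdot(\text{something})\cdot e^{-2E^{-1/2}\int_{-\infty}^x m/h}$, and for \eqref{e:ext} a weight that is constant on $[-R,R]$ with $w' \sim (1+|x|)^{-1-\delta}$ outside.
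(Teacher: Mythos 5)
Your skeleton is the same as the paper's: pointwise energy $F=|hu'|^2+E|u|^2$, multiply by $w$ itself, integrate $(wF)'$ over $\mathbb R$, absorb the potential term via \eqref{e:w2}, and treat the source term by weighted Cauchy--Schwarz. (As a side remark, your worry about non-monotone $w$ is moot: \eqref{e:w2} forces $w'\ge \frac kh|Vw|\ge 0$.) But there is a genuine gap at precisely the point you wave at: the $\varepsilon$-term. After multiplying by $w$ and integrating, it contributes $2\varepsilon\int|wuu'|$, which carries no sign and, crucially, no factor of $w'$ --- only the bound $|w|\le 1$. So unlike the $V$- and source-terms it cannot be absorbed into $E\int w'|u|^2+\int w'|hu'|^2$, and the assertion that $E\ge 2\varepsilon$ ``makes it harmless after integration'' is not an argument: $\varepsilon$ need not be small relative to $h$ or to $w'$, and $\varepsilon\int|uu'|$ is not controlled by the weighted quantities on your left-hand side. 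This is where the paper does most of its work: it splits $2\varepsilon\int|wuu'|\le 2(\int\varepsilon|u|^2)^{1/2}(\int\varepsilon w^2|u'|^2)^{1/2}$, converts the first factor using the flux identity $\int\varepsilon|u|^2=-\im\int(w')^{1/2}v\bar u\le\int(w')^{1/2}|vu|$ (pair the equation with $\bar u$ and take imaginary parts), and then integrates by parts once more in $\int\varepsilon|w||u'|^2$, using the equation and \eqref{e:w2} again, to turn that factor into terms weighted by $w'$ or $(w')^{1/2}|v|$. Only after this conversion does the hypothesis $E\ge2\varepsilon$ enter (via $\varepsilon/(E+\varepsilon)^{1/2}\le(E/6)^{1/2}$) to make all absorptions close. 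Without some such mechanism your claimed inequality $\int w'F\le Ch^{-1}\|(w')^{1/2}f\|_{L^2}(\int w'F)^{1/2}$ simply does not follow from the energy identity.

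A secondary issue is the justification of $\int(wF)'=0$. With $V\in L^1$ only, $u$ is not in $H^2$ (one has $u''\in L^1+L^2$ at best), so ``$u\in H^2$ decays'' is not available, and reducing to $f\in C_c^\infty$ does not by itself give decay of $F$. The paper instead proves a preliminary lemma that $P$ is self-adjoint on the domain $\mathcal D=\{u\in L^2\cap L^\infty:\ u'\in L^2\cap L^\infty,\ Pu\in L^2\}$; the bounds $u,u'\in L^2\cap L^\infty$ are what guarantee $wF,(wF)'\in L^1$, hence $\int(wF)'=0$, and they are also used in the second integration by parts for the $\varepsilon$-term. You would need this (or an equivalent substitute) to make the boundary-term bookkeeping rigorous under the stated hypotheses.
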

Note that  $w$ may depend on $h$, $V$, and $E$. As a simpler first case, the reader can consider $V \equiv 0$ and $w(x) = 1 - (1 + x)^{-\delta}$, $x > 0$, $\delta > 0$, and $w$ odd. A variant of this is used in the proof of Theorem 2 below.

To prove Theorem \ref{t:m}, we will need the following  essentially well-known lemma. 

\begin{lem}
Let $\mathcal D$ be the set of all $u \in L^2 \cap L^\infty$ such that $u' \in  L^2 \cap L^\infty$ and $Pu \in L^2$.  Then $P$ is self-adjoint on $L^2$ with domain $\mathcal D$.
 In particular, $P-z$ is bijective from $\mathcal D$ to $L^2$ for all $z \in \mathbb C \setminus \mathbb R$.
\end{lem}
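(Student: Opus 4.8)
The plan is to establish self-adjointness of $P = -h^2\partial_x^2 + V$ on the proposed domain $\mathcal D$ by a perturbation argument, treating $V$ as a relatively bounded perturbation of the free operator $P_0 = -h^2\partial_x^2$. First I would recall that $P_0$ with domain $H^2(\mathbb R)$ is self-adjoint, this being the standard fact for the one-dimensional free Schr\"odinger operator (via the Fourier transform, which conjugates $P_0$ to multiplication by $h^2\xi^2$). The key analytic input is the one-dimensional Sobolev/interpolation inequality: for $u \in H^2(\mathbb R)$ and any $a>0$,
\[
 \|u\|_{L^\infty}^2 \le a\,\|u'\|_{L^2}^2 + C a^{-1}\|u\|_{L^2}^2,
\]
which follows from $|u(x)|^2 = 2\int_{-\infty}^x \re(\bar u u')\,dt \le 2\|u\|_{L^2}\|u'\|_{L^2}$ combined with Cauchy--Schwarz, or equivalently from $\|u'\|_{L^2}^2 \le \|u\|_{L^2}\|u''\|_{L^2}$. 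This gives, for $V \in L^1$,
\[
 \|Vu\|_{L^2}^2 \le \|V\|_{L^1}\||V|^{1/2}u\|_{L^\infty}^2 \cdot(\text{something}),
\]
more precisely $\|Vu\|_{L^2} \le \|V\|_{L^1}^{1/2}\|u\|_{L^\infty}\|V\|_{L^1}^{1/2} = \|V\|_{L^1}\|u\|_{L^\infty}$, so $\|Vu\|_{L^2} \le \|V\|_{L^1}\|u\|_{L^\infty}$, and then the $L^\infty$ bound above shows $V$ is infinitesimally $P_0$-bounded (relative bound $0$).

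Given infinitesimal relative boundedness of $V$ (which is symmetric, being real-valued multiplication), the Kato--Rellich theorem immediately yields that $P = P_0 + V$ is self-adjoint on $D(P_0) = H^2(\mathbb R)$. The remaining task, then, is purely a matter of identifying $\mathcal D$ with $H^2(\mathbb R)$: I must show that $u \in \mathcal D$ if and only if $u \in H^2(\mathbb R)$. For one direction, if $u \in H^2(\mathbb R)$ then $u, u' \in L^2$, and by the Sobolev embedding $H^1(\mathbb R)\hookrightarrow L^\infty$ both $u$ and $u'$ lie in $L^\infty$; moreover $P_0 u \in L^2$ and $Vu \in L^2$ by the estimate above, so $Pu \in L^2$, hence $u \in \mathcal D$. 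Conversely, if $u \in \mathcal D$ then $u, u' \in L^2$ so $u \in H^1(\mathbb R)$, and from $Pu \in L^2$ together with $Vu \in L^2$ (again using $u \in L^\infty$ and $V \in L^1$) we get $P_0 u = Pu - Vu \in L^2$, i.e.\ $h^2 u'' \in L^2$, which forces $u \in H^2(\mathbb R)$. Thus $\mathcal D = H^2(\mathbb R) = D(P)$ and the self-adjointness transfers. The final sentence of the lemma, that $P - z$ is bijective from $\mathcal D$ to $L^2$ for $z \in \mathbb C \setminus \mathbb R$, is then just the standard consequence that the spectrum of a self-adjoint operator is real, so $P - z$ is invertible (with bounded inverse) for non-real $z$.

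The main obstacle is bookkeeping rather than conceptual: one must verify carefully that $L^1$ decay of $V$ alone—with no pointwise bound—suffices for the relative bound to be zero, and the cleanest route is the $\|Vu\|_{L^2} \le \|V\|_{L^1}\|u\|_{L^\infty}$ estimate followed by the interpolation inequality, which is why I would foreground that computation. A minor subtlety worth a remark is that membership in $\mathcal D$ is stated with the extra conditions $u, u' \in L^\infty$; these are automatic for $H^2$ functions and are harmless, but including them in the definition makes the domain manifestly a set of genuine functions (not just $L^2$ classes) on which the pointwise energy $F$ of \eqref{e:fdef} and the integrations by parts in the proof of Theorem \ref{t:m} make sense. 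Since the paper calls the lemma "essentially well-known," I would keep the write-up brief, citing Kato--Rellich and the Sobolev inequality and only spelling out the identification $\mathcal D = H^2(\mathbb R)$.
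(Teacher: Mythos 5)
Your argument breaks at the very first estimate. The inequality $\|Vu\|_{L^2}\le \|V\|_{L^1}\|u\|_{L^\infty}$ is false: what $V\in L^1$ and $u\in L^\infty$ give you is $\|Vu\|_{L^1}\le \|V\|_{L^1}\|u\|_{L^\infty}$, while an $L^2$ bound on $Vu$ would require $V\in L^2_{\mathrm{loc}}$, which is not implied by $V\in L^1$ (take $V(x)=|x|^{-3/4}$ for $|x|\le 1$ and $V=0$ otherwise). For such a $V$ and a smooth compactly supported $u$ with $u(0)\neq 0$ one has $Vu\notin L^2$, so $V$ does not even map $H^2(\mathbb R)$ into $L^2$, the relative (operator) bound is not just nonzero but undefined, and Kato--Rellich cannot be invoked. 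The same example shows that your identification $\mathcal D=H^2(\mathbb R)$ fails in both directions: such a $u\in C_c^\infty\subset H^2$ has $Pu=-h^2u''+Vu\notin L^2$, so $u\notin\mathcal D$; conversely, for $u\in\mathcal D$ with $u(0)\neq 0$ one gets $h^2u''=Vu-Pu\notin L^2$ near $0$, so $u\notin H^2$. The domain $\mathcal D$ is genuinely different from $H^2$ when $V\in L^1\setminus L^2_{\mathrm{loc}}$, and this is precisely the regime the lemma is designed to cover.

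What does survive from your plan is the interpolation inequality $\|u\|_{L^\infty}^2\le a\|u'\|_{L^2}^2+Ca^{-1}\|u\|_{L^2}^2$, but at the level of quadratic forms: it shows $\int|V||u|^2\le\|V\|_{L^1}\|u\|_{L^\infty}^2$ is an infinitesimally form-bounded perturbation of $\int|hu'|^2$, so the KLMN theorem produces a self-adjoint realization of $P$. However, that route still leaves the real content of the lemma unproved, namely that the domain of this operator is exactly $\mathcal D$ (equivalently, that the maximal operator is self-adjoint), and that identification is where the work lies. The paper instead starts from the maximal domain $\mathcal D_{\max}=\{u\in L^2: u'\in L^1_{\mathrm{loc}},\ Pu\in L^2\}$, proves $\mathcal D_{\max}=\mathcal D$ by an elementary a priori bootstrap (a closed system of inequalities for $\sup|u|$, $\sup|u'|$, $\int|u'|^2$ on $[-a,a]$, uniform in $a$), and then gets $P=P^*$ by combining integration by parts ($P\subset P^*$) with the Sturm--Liouville limit-point theory at $\pm\infty$ ($P^*\subset P$), citing Weidmann, Naimark, or Zettl. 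If you want to keep a perturbative flavor, you would need to work with forms and then still supply an argument equivalent to the paper's identification of the operator domain; the operator-sum Kato--Rellich argument as you wrote it cannot be repaired under the hypothesis $V\in L^1$ alone.
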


\begin{proof}[Proof of Lemma]

Let $\mathcal D_\textrm{max}$ be the set of all $u \in L^2$ such that $u' \in L^1_\textrm{loc}$ and $Pu \in L^2$. We  begin by proving that  $\mathcal D_\textrm{max} = \mathcal D$. Indeed, for any $a>0$ and $u \in \mathcal D_\textrm{max}$, by integration by parts and Cauchy--Schwarz we have
\begin{equation*}
\begin{gathered}
\int_{-a}^a|u'|^2 = u'\bar u|_{-a}^a - \int_{-a}^a u''\bar u \le 2\sup_{[-a,a]}|u'|\sup_{[-a,a]} |u| + h^{-2}\|V\|_{L^1} \sup_{[-a,a]}|u|^2 + h^{-2}\|Pu\|_{L^2}\|u\|_{L^2},\\
\sup_{[-a,a]}|u|^2 =\sup_{x \in [-a,a]}  \left(|u(0)|^2 + 2 \re \int_0^x u'\bar u\right)  \le |u(0)|^2 + 2\left(\int_{-a}^a|u'|^2\right)^{1/2}\|u\|_{L^2},\\
\sup_{[-a,a]} |u'|^2 \le |u'(0)|^2 + 2 h^{-2}\|V\|_{L^1}\sup_{[-a,a]}|u|  \sup_{[-a,a]}|u'| +  2h^{-2}\|Pu\|_{L^2}\left(\int_{-a}^a|u'|^2\right)^{1/2}. 
\end{gathered}
\end{equation*}
This is a system of inequalities of the form $x^2 \le 2yz+Ay^2 + B$, $y^2 \le C + Dx$, $z^2 \le E + Fyz + Gx$. After using the second to eliminate $y$, we obtain a system in $x$ and $z$ with quadratic left hand sides and subquadratic right hand sides. Hence $x$, $y$, and $z$ are each bounded in terms of $A, B, \dots, G$. Letting $a \to \infty$, we conclude that $u' \in L^2$, $u \in L^\infty$, and $u' \in L^\infty$. Hence  $\mathcal D_\textrm{max} = \mathcal D$.

Equip $P$  with the domain $\mathcal D_\textrm{max} = \mathcal D \subset L^2$. By integration by parts, $P \subset P^*$. But, by Sturm--Liouville theory,  $P^* \subset P$: see  \cite[\S 3.B]{w}, \cite[\S17.4]{n}, or \cite[Lemma 10.3.1]{zet}. Hence $P=P^*$.
\end{proof}

\begin{proof}[Proof of Theorem \ref{t:m}]
Since $L^2\cap L^\infty$ is dense in $L^2$, it is enough to prove
\begin{equation}\label{e:apr}
   \frac {E}{7} \int w'|u|^2 +  \frac 1 6 \int w' |hu'|^2 \le \frac {15} {2h^2} \int |v|^2, \quad  \textrm{ for all }v \in L^2 \cap L^\infty,
\end{equation}
where $u = (P-E-i\varepsilon)^{-1}(w')^{1/2}v$. 

To prove \eqref{e:apr}, define $F$ by \eqref{e:fdef}. By the Lemma, $F' \in L^1$ and is given by 
\[
 F' = - 2 (w')^{1/2}\re v \bar u'  + 2 V \re u \bar u'- 2 \re i \varepsilon u \bar u ',
\]
and we have
\[
 (wF)' = - 2 w(w')^{1/2}\re v \bar u'   + 2 wV \re u \bar u'- 2 w \re i \varepsilon u \bar u ' + w'|hu'|^2 + E w'|u|^2.
\]
Using $|w| \le 1$ and \eqref{e:w2} gives
\begin{equation}\label{e:wf'}
 (wF)' \ge - 2(w')^{1/2}|vu'| - \frac {2h}k w'|uu'| - 2 \varepsilon|wuu'| + w'|hu'|^2 + E w'|u|^2.
\end{equation}
Observe now that, by the Lemma,  $wF$ and $(wF)'$ are in $L^1$, so that
$\int (wF)' = 0$, and hence
\begin{equation}\label{e:epsright}
E \int w'|u|^2  +  \int w' |hu'|^2  \le 2\int (w')^{1/2}|vu'| + \frac {2h}k \int  w'|uu'|+ 2  \varepsilon \int |wuu'|.
\end{equation}
The first two terms on the right contain $w'$, and that will make them easy to handle. The last term requires more work and we begin by showing how to estimate it using integrals containing $w'$. By Cauchy--Schwarz and integration by parts we have
\begin{equation}\label{e:csep}
 2\varepsilon \int |wuu'|  \le 2\left(  \int \varepsilon |u|^2 \int \varepsilon w^2|u'|^2\right)^{1/2}, \quad 
 \int \varepsilon |u|^2 = - \im \int (w')^{1/2}v \bar u \le \int(w')^{1/2}|vu|.
\end{equation}
Similarly,
\[
\begin{split}
  \int \varepsilon w^2|u'|^2 &\le \int \varepsilon |w||u'|^2 = -  \re \int \varepsilon |w|'u'\bar u - \re\int \varepsilon |w| u'' \bar u \\
  &=  - \re \int \varepsilon |w|'u'\bar u + \re\int \frac \varepsilon {h^2} |w| (w')^{1/2} v \bar u + \int \frac{\varepsilon E}{h^2} |w||u|^2 - \int \frac{\varepsilon}{h^2}|w| V|u|^2\\
  &\le \varepsilon \int w'|uu'| + \frac \varepsilon {h^2}\int(w')^{1/2}|vu| + \frac E {h^2} \int (w')^{1/2}|vu| + \frac \varepsilon {hk}\int w'|u|^2,
\end{split}
\]
where we used $|w|u'\bar u \in L^1$, $(|w|u'\bar u)' \in L^1$, $|w|\le1$, the second of \eqref{e:csep}, and \eqref{e:w2}. 
Substituting into the first of \eqref{e:csep} gives
\[\begin{split}
  2\varepsilon \int |wuu'| &\le 2\left(\int (w')^{1/2}|vu|\left( \varepsilon\int w'|uu'| + \frac {E+ \varepsilon}{h^2} \int (w')^{1/2}|vu| + \frac \varepsilon {hk}\int w'|u|^2) \right) \right)^{1/2}\\
  &\le \frac {\varepsilon h} {(E+ \varepsilon)^{1/2}} \int w'|uu'| + \frac {2(E+ \varepsilon)^{1/2}}{h} \int (w')^{1/2}|vu| + \frac{\varepsilon }{k(E+ \varepsilon)^{1/2}} \int w'|u|^2,
\end{split}\]
where we used  $2\sqrt{ab} \le a + b$ with $a=\int(w')^{1/2}|vu|(E+\varepsilon)^{1/2} /h$. Using the bounds $\varepsilon \le E/2$ and  $\varepsilon/(E+\varepsilon)^{1/2} \le (E/6)^{1/2}$, and
substituting into \eqref{e:epsright} gives
\[\begin{split}
\left( E  - \frac{E^{1/2}}{6^{1/2}k}\right)&\int w'|u|^2  +  \int w' |hu'|^2  \le \\ &2\int (w')^{1/2}|vu'| + \left(\frac {2h}k + \frac{E^{1/2}h}{6^{1/2}}\right)\int  w'|uu'|+  \frac {(6E)^{1/2}}{h} \int (w')^{1/2}|vu|.
\end{split}\]
Now we estimate the three terms on the right using Cauchy--Schwarz and $2\sqrt{ab} \le a + b$, balancing the constants so that all terms with $u$ can be absorbed back into the left side.
\[\begin{split}
  2\int (w')^{1/2}|vu'| &\le \frac 3 {h^2} \int  |v|^2 + \frac 13 \int w'|hu'|^2,\\
 \left(\frac {2h}k + \frac{E^{1/2}h}{6^{1/2}}\right) \int  w'|uu'| &\le  \frac 12 \left(\frac {2}k + \frac{E^{1/2}}{6^{1/2}}\right) ^2\int w'|u|^2  + \frac 12 \int w'|hu'|^2,\\
\frac {(6E)^{1/2}}{h}\int (w')^{1/2}|vu| &\le \frac {9} {2h^2} \int|v|^2  + \frac E 3 \int w'|u|^2,\\
\end{split}\]
giving
\[
 \left( \frac {2E}3  - \frac {E^{1/2}} {6^{1/2}k} - \frac 12 \left(\frac {2}k + \frac{E^{1/2}}{6^{1/2}}\right) ^2\right)\int w'|u|^2  +  \frac 16\int w' |hu'|^2  \le \frac {15} {2h^2} \int |v|^2,
\]
which implies \eqref{e:apr}. 
\end{proof}

\begin{proof} [Proof of Theorem \ref{t:adelta}]
Let
\[
 w(x) = 2\exp\left(-\frac k {2h} \int m\right)\sinh\left(\frac k h \int_a^x m\right),
\]
with $a \in \mathbb R$ chosen such that $\int_a^\infty m = \int_{-\infty}^a m$. Then substituting
\[
 w'(x) \ge \frac {2k}h \exp\left(-\frac k {2h} \int m\right)m(x)
\]
into \eqref{e:resestw} gives \eqref{e:expm}.
\end{proof}

\begin{proof}[Proof of Theorem \ref{one d resolv est}]
We apply \eqref{e:resestw} with $w$ an odd function vanishing on $[-R,R]$ and obeying
\[
w(x) = 1 - \frac{(1+R)^\delta}{(1+x)^{\delta}}, \quad \textrm{ when } x > R,
\]
and use $w'(x) = \delta(1+R)^{\delta}(1+|x|)^{-1-\delta} \textbf{1}_{|x|>R}$.
\end{proof}

\section{Integral kernel estimates}\label{s:wronskian}
We conclude by proving  \eqref{e:ext} for $\varepsilon = 0$ in another way, by estimating the integral kernel of $(P-E-i0)^{-1}$. By direct calculation (by the method of variation of parameters, or see \cite[(1.28)]{tz} or \cite[Proposition 5.1.4]{y}) this integral kernel is given by
\begin{equation}\label{e:kdef}
 K(x,y) = -u_-(x)u_+(y)/h^2W(u_-,u_+), \qquad \textrm{for } x \le y,
\end{equation}
and it obeys $K(x,y) = K(y,x)$, where $(P-E)u_\pm=0$,
\[
 u_+(x) = 
 \begin{cases}e^{ix\lambda} &\textrm{ when } x >R,\\
  A e^{ix \lambda} + B e^{-ix \lambda}, &\textrm{ when } x <-R,
 \end{cases}
 \quad  
 u_-(x) = 
  \begin{cases} C e^{ix \lambda} + D e^{-ix \lambda}&\textrm{ when } x >R,\\
  e^{-ix \lambda}, &\textrm{ when } x <-R,
 \end{cases}
\]
where $\lambda = E^{1/2}/h$, and  $W(u_-,u_+) = u_-u_+' - u_-'u_+$ is the Wronskian. The solutions $u_\pm$ are (multiples of) distorted plane waves or Jost solutions, and they are also used to define the scattering matrix. See \cite[\S2.4]{dz} for an introduction, and \cite[\S5.1.1]{y} for more on $u_\pm$ and $K$ when $V$ is not necessarily compactly supported.

Since the Wronskian is independent of $x$, we compute it when $\pm x>R$ and equate to obtain
\[
 W(u_-,u_+) = 2A i\lambda = 2D i\lambda \qquad \Longrightarrow \qquad A=D.
\]
Repeating the above with $ W(\overline{u_+},u_+)$,  $W(\overline{u_-},u_-)$, and $W(\overline{u_-} ,u_+)$, 
gives
\[
  1 = |A|^2 - |B|^2, \qquad -1 = |C|^2-|D|^2, \qquad -B = \overline{C},
\]
(which is equivalent to unitarity of the scattering matrix).
We conclude that when $|x|>R$ and $|y|>R$ we have
\begin{equation}
 |K(x,y)| \le \frac{|A|+|B|}{2|A|hE^{1/2}} \le \frac 1 {hE^{1/2}}.
\end{equation}
Combining this with $
\int_R^\infty(1+y)^{-\frac{1+\delta}2}f(y)dx \le  \|f\|_{L^2(\mathbb R)}\delta^{-1/2}(1+R)^{-\delta/2}
$
gives 
\[
  \|{\bf 1}_{|x|>R}(1 + |x|)^{- \frac {1 + \delta}2}(P - E-i0)^{-1}(1+|x|)^{- \frac {1 + \delta}2}{\bf 1}_{|x|>R}\|_{L^2 \to L^2} \le \frac {2} {h \delta (1+R)^\delta E^{1/2}}.
\]

\end{document}